\newtheorem{theorem}{Theorem}[section]
\newtheorem{lemma}[theorem]{Lemma}
\newcommand{\R}{\mathbb R}
\theoremstyle{definition}
\newtheorem{example}[theorem]{Example}
\newtheorem{corollary}[theorem]{Corollary}
\newtheorem{que}[theorem]{Question}
\theoremstyle{remark}
\newtheorem{remark}[theorem]{Remark}
\numberwithin{equation}{section} \DeclareMathOperator{\Image}{Im}
\DeclareMathOperator{\trace}{tr} \DeclareMathOperator{\tra}{tr}
\DeclareMathOperator{\Oct}{\mathbb{O}}
\def\a{\alpha}
\def\id{\operatorname{id}}
\title[Images of polynomials on octonions]{The images of multilinear and semihomogeneous polynomials on the algebra of octonions}
\thanks{$^1 $Department of Mathematics, Bar-Ilan University, Ramat Gan 52100, Israel}
\thanks{$^2 $Department of Mathematics, Ariel university,  Ariel 40700, Israel}
\thanks{We would like to thank I.Shestakov and E.Plotkin for interesting and fruitful discussions regarding this paper.}
\author[A. Kanel-Belov]{Alexei Kanel-Belov$^1$}
\author[S. Malev]{Sergey Malev$^2$}
\author[C. Pines]{Coby Pines$^2$}
\author[L. Rowen]{Louis Rowen$^1$}
\email[Alexei Kanel-Belov]{kanelster@gmail.com}
\email[Sergey Malev]{sergeyma@ariel.ac.il}
\email[Coby Pines]{cobypinesdirac@gmail.com}
\email[Louis Rowen]{rowen@math.biu.ac.il}
\begin{document}

\begin{abstract}
The generalized L'vov-Kaplansky conjecture states that for any finite-dimensionl simple algebra $A$ the image of a multilinear polynomial on $A$ is a
vector space.
In this paper we prove it for the algebra of octonions $\Oct$ over a  field satisfying certain specified
conditions (in particular, we prove it for quadratically closed field and for field $\R$).
 In fact, we prove that the image set must be either
$\{0\}$, $ F $, the space of pure octonions $V$, or $\Oct$.
We discuss possible evaluations of semihomogeneous polynomials on $\Oct$ and of arbitrary polynomials on the corresponding Malcev algebra.
\end{abstract}

\maketitle

\section{Introduction} The question of possible  evaluations of a multilinear polynomial $p$ on a matrix
algebra was reputedly raised by Kaplansky. L'vov reformulated this
question, asking whether $\Image p$ is a vector subspace.

In recent years there has been considerable interest on the values
of a polynomial $p$ on an algebra, \cite{Br,BrK,Ch,Spe}, in
particular on matrix algebras  and related finite dimensional
algebras,
\cite{BMR1,BMR2,BMR3,BMR4,BMRY,DK,LT,MO,M1,M2,MP,Spe,W1,W2,WLB}.

Note  for a non-simple finite dimensional algebra that the
Kaplansky-L'vov problem can fail. In particular, it fails for a
Grassmann algebra of a linear space of dimension at least $4$, over
a field of characteristic $\neq 2$, which is finite dimensional but
not simple. Indeed, consider the multilinear polynomial
$p(x,y)=x\wedge y-y\wedge x$. In this case $e_1\wedge e_2=p(\frac 12
e_1,e_2)$ and $e_3\wedge e_4$ both belong to the image of $p$, but
their sum does not. Thus the image is not a vector space.

On the other hand the requirement of finite dimensionality is
important as well. Indeed, in \cite{ML} one  finds an example of an
(infinite dimensional) division algebra for which this conjecture
fails for the polynomial $p(x,y)= xy - yx.$

For a more detailed review see \cite{BMRY}. In this paper we
consider the octonians $\Oct = (Q,\a)$, a simple alternative
algebra, and prove, under a mild condition on~$\a$:

\medskip

{\bf Theorem~\ref{Oct}.} The image of a multilinear polynomial $p$
on an octonion algebra $\Oct$ is either $\{0\}$, $ F $, $V$ or
$\Oct$.

\bigskip

One can also get results for ``semihomogeneous'' polynomials.

\medskip

{\bf Theorem~\ref{semi}.} Let $p(x_1,\dots, x_m)$ be a
semihomogeneous polynomial with nonzero weighted degree $d$, i.e.,
letting $d_i$ be the degree of $x_i$, there exist weights
$w_1,w_2,\dots,w_m$ such that in each monomial
 $$w_1d_1+w_2d_2+\dots+w_md_m=d.$$ Then the evaluation of $p$ on
$\Oct$ is either $\{0\}$, or $F$, or V is Zariski dense in $\Oct$.

\bigskip

Finally, we also treat Mal'cev algebras:

{\bf Theorem~\ref{Mal}.} Let $p(x_1,\dots,x_m)$ be an arbitrary
polynomial in $m$ anticommutative variables. Then its evaluation on
$V$ is either $\{0\}$ or~$V$.

\section{Preliminaries} Throughout this paper, $F$ denotes  a
field of  characteristic $\neq 2$, and $A$ denotes a unital algebra
over $F$. A \textbf{magma} is a set with a binary multiplication.

The \textbf{free magma} $\mathcal F$ is defined inductively from an
alphabet $X=\{x_j : j\in J\}:$

$X \subset   \mathcal F;$  if $(y_1),(y_2) \in \mathcal F$ then
$(y_1y_2) \in \mathcal F.$ The elements $w\in \mathcal F$ are called
\textbf{words}. The \textbf{degree} $\deg_k(w)$ of $x_k$ in a word
$w$ is the number of times $x_i$ occurs in~$w$. Given a magma $M$,
one can construct the \textbf{magma algebra} $F[M]$ as the free
module $F^M$, endowed with
 multiplication
$$\left(\sum a_i w_i\right) \left(\sum \a_j' w_j\right)=  \sum _w \left(\sum_{w_iw_j = w} \a_i\a'_j\right) w.$$
A \textbf{division algebra} is an algebra (not necessarily
associative) in which every nonzero element is invertible.
 The
\textbf{free nonassociative algebra} $F\{x\}$ is the magma algebra
of the free magma in $X$.    A \textbf{nonassociative polynomial} is
an element $p =\sum \a_i w_i$ of $F\{x\}$. A \textbf{polynomial
identity} (PI) of a (not necessarily associative) algebra~$A$ over
$F$ is a noncommutative polynomial in the free nonassociative
algebra $F\{x\}$, which vanishes identically for any substitution
in~$A$. We write $\id (A)$ for the set of PI's of an algebra~$A$.
The set $\id(A)$ can be viewed as an ideal of $F\{x\}$, closed under
all algebra homomorphisms $F\{ x\}\to F\{ x\}$. Such an ideal $I$ of
$F\{ x\}$ is called a $T$-\textbf{ideal}. In general, the
\textbf{T-ideal} of a polynomial \textbf{in an algebra} $A$ is the
ideal generated by all substitutions of the polynomial in $A$.

Conversely, for any $T$-ideal $I$ of $F\{ x\}$, each element of $I$
is a PI of the quotient algebra $F\{ x\}/I$, and $F\{ x\}/I$ is
\textbf{relatively free}, in the sense that for any PI-algebra $A$
with $\id(A) \supseteq I,$ and any $a_j\in A,$ there is a unique
homomorphism $F\{ x\}/I \to A$ sending $x_j \mapsto a_j$ for $j \in
J$.

We need some background from \cite{BMS,Sp}. Let $A$ be an  algebra
over $F$ with an involution $(*)$ (anti-automorphism of degree $\le
2$) satisfying $$a +a^*, aa^*\in F,\quad \forall a \in A.$$ Then $A$
has a trace $\tra(a):= \frac 12 (a^*+a)$, and a norm $\|a\| =
a^*a.$\footnote{One can also treat characteristic 2 by defining
$\tra(a)$ to be the coefficient of $a$ in its minimal polynomial.
Also note that if $a = \alpha + n$ for $n^2 = 0$ then $a^2 =
\alpha^2.$}
 If $A = F,$ we take $(*)$ to be
the identity, so every norm is a square.

 Let $0\ne \a \in F$. The algebra $(A, \a)$ is defined to
have underlying vector space structure $ A \oplus A,$ with
componentwise addition and scalar multiplication, and multiplication
is defined by the formula determined by the formula
 $$(a_1, a_2)(b_1, b_2) = ( a_1b_1 + \a b_2^*a_2, b_2a_1 +
a_2b_1^* ). $$ There is a natural embedding $A \to (A, \a)$ given by
$a \mapsto (a,0)$. $(A, \a)$ in turn has an involution defined by
$(a_1, a_2)^* = (a_1^*,-a_2)$, and trace $\trace(a) =
\frac{a+a^*}2,$ so $\trace(a_1,a_2)= \trace(a_1).$ The norm is given
by
\begin{equation*} \begin{aligned} \| (a_1, a_2)\| & = (a_1, a_2)(a_1^*,- a_2) = (a_1 a_1^* - \a a_2^*
a_2, -a_2a_1 +a_2 a_1)\\ &= (\|a_1\|-\a \|a_2\|,0).\end{aligned}
\end{equation*}


When $A$ is a field, $(A,\a)$ is called a \textbf{quaternion
algebra} $Q$. As a special case,  take $F = \mathbb R$, $A = \mathbb
C,$ $(*)$ complex conjugation, and $(A, -1)$ is denoted as $\mathbb
H$, which is a division algebra. When $A$ is a quaternion algebra
$Q$ over an arbitrary field, $(Q,\a)$ is called a
\textbf{Cayley-Dickson algebra}.  $(Q,\a)$ is not associative, but
is alternative, i.e., satisfies the law $[a,a,b] = [a,b,a] = [b,a,a]
= 0$ for all $a,b$. For example, the \textbf{octonion division
algebra} $\mathcal O $ is $(\mathbb H, -1)$.

 $\mathcal O $ is a nonassociative
 division algebra.
 Every simple alternative algebra either is associative or a
 Cayley-Dickson algebra. Thus, the following result is an
 alternative version of \cite{Am}, which first appeared in \cite{Sh}, also cf.~\cite[Corollary~3.26]{Row2}, \cite[p.~22]{Row2}.

 \begin{theorem}\label{Am} The relatively free algebra of any Cayley-Dickson algebra is
 a central order in a
 Cayley-Dickson division
 algebra.
\end{theorem}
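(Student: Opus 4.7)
The plan is to run the alternative-algebra analogue of Amitsur's classical generic-matrix argument, producing the desired Cayley-Dickson division algebra by an explicit generic construction. Set $R = F\{X\}/\id(\mathcal{O})$ with $X=\{x_1,x_2,\dots\}$, writing $\mathcal{O}$ for a fixed Cayley-Dickson algebra $(Q,\a)$. Since every element of $\mathcal{O}$ satisfies $u^2 - \trace(u)\,u + \|u\| = 0$ with $\trace(u), \|u\| \in F$, this quadratic identity lies in $\id(\mathcal{O})$, so in $R$ each generic element obeys $x_i^2 = t_i x_i - n_i$ with $t_i, n_i$ central. Let $C$ be the $F$-subalgebra of $Z(R)$ generated by all the $t_i$ and $n_i$.

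First I would check that $C$ is a polynomial ring in the $t_i$ and $n_i$ by specializing the $x_i$ to sufficiently general elements of a fixed Cayley-Dickson division algebra (for instance the real octonions), whose traces and norms are algebraically independent over $F$. Second, I would build a candidate target $D$ over $K := \mathrm{Frac}(C)$ by three successive Cayley-Dickson doublings of $K$, using as doubling scalars elements drawn from $\{t_i, n_i\}$ whose algebraic independence forces the quadratic form produced at each stage to be anisotropic. Third, I would invoke the universal property of the relatively free algebra: $D$ is itself a Cayley-Dickson algebra over $K$, hence $\id(D) \supseteq \id(\mathcal{O})$, so there is a unique homomorphism $\varphi: R \to D$ sending each $x_i$ to its obvious generic incarnation built from the doubling generators corresponding to $t_i, n_i$. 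A rank argument in the free rank-$8$ $K$-module $D$ shows $\varphi$ is injective.

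The main obstacle is verifying that $D$ is genuinely a division algebra, i.e., that its norm form---a $3$-fold Pfister form over $K$---is anisotropic. This reduces to showing that at each doubling step the scalar being adjoined is transcendental over the subfield assembled from previously used scalars, which is automatic once the $t_i, n_i$ are chosen algebraically independently in $C$. The complementary claim that $R$ is a \emph{central order} in $D$, i.e., that every element of $D$ has the form $\varphi(r)c^{-1}$ with $r \in R$ and $c \in C\setminus\{0\}$, then follows because one can exhibit a $K$-basis of $D$ using iterated products of the $\varphi(x_i)$, absorbing any denominators into $C$. Combining injectivity of $\varphi$, anisotropy of the norm form, and the basis argument yields the theorem, paralleling the associative case treated in \cite{Am} and its alternative extension from \cite{Sh}.
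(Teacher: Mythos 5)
Your generic-element strategy is in the right spirit, but two steps fail as written, and the paper takes a different (and much shorter) route: it observes that $R=F\{X\}/\id(\Oct)$ is alternative, has no zero divisors, and satisfies all the identities of a Cayley--Dickson \emph{division} algebra \emph{including its central polynomial}, and then quotes the alternative version of the Posner--Formanek--Markov--Razmyslov--Rowen theorem \cite{Mar} to conclude that $R$ is a central order in a Cayley--Dickson division algebra (following \cite{Am}, \cite{Sh}). The first gap in your argument is the assertion that $t_i=\trace(x_i)$ and $n_i=\|x_i\|$ are central elements \emph{of} $R$. They are not: realizing $R$ as the algebra of generic elements $X_i=\sum_j\xi_{ij}e_j$ inside $\Oct\otimes_F F[\xi_{ij}]$, the trace $2\xi_{i0}$ is a degree-one central element, while the degree-one homogeneous component of $R$ is exactly $\sum_j F X_j$, which contains no nonzero scalar. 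So your ring $C$ does not sit inside $Z(R)$; it lives in the trace ring, and the essential content of the theorem is precisely that $Z(R)$ itself is large enough to serve as the ring of denominators. That is exactly what the existence of a central polynomial buys, and it is the point your proposal silently assumes away.

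The second gap is the choice of target. An algebra $D$ free of rank $8$ over $K=\mathrm{Frac}(C)$, with $C$ generated by the $2m$ elements $t_i,n_i$, has transcendence degree at most $2m+8$ worth of ``room,'' whereas the relatively free algebra on $m$ generators, realized generically, carries $8m$ independent coordinates modulo the $14$-dimensional action of $\mathrm{Aut}(\Oct)=G_2$; already for $m=3$ a dimension count shows no $F$-algebra embedding $\varphi\colon R\to D$ can be injective. The correct target is the central localization of the generic-element algebra itself (equivalently, $R$ localized at nonzero values of a central polynomial), whose center is the full field of $G_2$-invariant rational functions, not merely the subfield generated by the traces and norms of the generators; anisotropy of the norm form is then obtained by specializing to a Cayley--Dickson division algebra rather than by your doubling construction. (A smaller point: by Artin's theorem every $2$-generated subalgebra of $\Oct$ is associative, so the statement is really about the relatively free algebra on at least three, or countably many, generators; any explicit construction of the target must respect this.)
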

(It is clearly alternative, and satisfies the same polynomial
identities of the octonion division algebra, including its central
polynomial, so one uses the alternative version of the
Posner-Formanek-Markov-Razmyslov-Rowen theorem (see \cite[Corollary 1]{Mar})).

A quaternion algebra $Q$ can be written as $$Q = F+Fx+Fy+Fxy,$$
where $xu = -yx$; here $\trace(x) = \trace(y) = \trace(xy) = 0,$ and
$trace(1) = 2.$
 Now take  the
octonion algebra, $\Oct = (Q,\a)$ for an arbitrary quaternion
algebra~$Q$. There is a bilinear form $\langle \phantom{w} ,
\phantom{w} \rangle$ given by $\langle a,b \rangle = ab^*+ba^*$ for
$a,b\in\Oct.$ By \textbf{pure octonion} we mean an octonion of trace
0. We have
 an orthogonal base of
\textbf{basic octonions}, we mean the following eight
octonions:\begin{equation*}
   e_0 =1, e_1, e_2, e_3, e_4, e_5, e_6, e_7,
\end{equation*} where $e_2, \dots, e_8$ are pure octonions.

Every octonion $x$ admits a presentation in the
form:\begin{equation}
    x=\sum_{i=0}^7 \beta_ie_i
\end{equation}
for unique scalars $ \beta_i\in F$. The pure octonions are  those
octonions $x$ with $ \beta_0=0$, and the set of all pure octonions
will be denoted by $V$.\newline By the \textbf{scalar product}
$\langle \cdot, \cdot \rangle$ on $\mathbb{O}= (Q,\alpha)$, we mean
the  scalar product  $a\mapsto aa^*$. (This becomes the standard
Euclidean scalar product on the real octonions.) A \textbf{unit
length} octonion is an octonion $x$ with $\|x\|=1$. (In the case of
the octonion division algebra over $\R$, the scalar product is just
the usual scalar product on $\R^{(8)},$ and the $e_i$ can be taken
to be an orthonormal base.\newline

 By an \textbf{automorphism} of $A$ we mean
a vector space isomorphism respecting multiplication.

A subset $V\subset A$ is called:
\begin{enumerate}
    \item a \textbf{cone}, if it is closed under scalar multiples.
\item  \textbf{self similar}, if it is closed under automorphisms of $A$.
\item an \textbf{irreducible self similar cone}, if it is a self similar cone such that any subset of $V$ which is also a self similar cone is either $\{0\}$ or $V$.
 \end{enumerate}
 \begin{lemma}\label{image cone}
The image of a multilinear polynomial $p(x_1,\dots,x_n)$ on an algebra $A$ is a self similar cone.
\end{lemma}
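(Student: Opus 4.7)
The plan is to verify directly the two defining properties of a self similar cone: closure under scalar multiplication and closure under automorphisms of $A$. Both follow essentially by naturality of polynomial evaluation, with multilinearity giving us the first and the fact that an automorphism is both linear and multiplicative giving us the second. There is no real obstacle here; the lemma is a structural observation, and the main thing to be careful about is the nonassociative setting (so $p$ is viewed as an element of the free nonassociative algebra $F\{x\}$, and its evaluation on $A$ is well defined because $A$ has a fixed underlying magma structure).

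First I would show the cone property. Suppose $v = p(a_1,\dots,a_n)$ lies in the image, and let $\lambda \in F$. Since $p$ is multilinear, in particular linear in $x_1$, we have
\[
\lambda v \;=\; \lambda\, p(a_1, a_2, \dots, a_n) \;=\; p(\lambda a_1, a_2, \dots, a_n),
\]
which is again in the image. Hence the image is closed under scalar multiples. (This step only uses linearity in one variable; multilinearity is not strictly needed for this part.)

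Next I would show the self similar property. Let $\phi$ be an automorphism of $A$, that is, an $F$-linear bijection $A \to A$ satisfying $\phi(ab) = \phi(a)\phi(b)$ for all $a,b \in A$. By induction on the structure of words in the free magma (and $F$-linearity extended to $F\{x\}$), one verifies that for every nonassociative polynomial $q \in F\{x\}$ and every $a_1,\dots,a_n \in A$,
\[
\phi\bigl(q(a_1,\dots,a_n)\bigr) \;=\; q\bigl(\phi(a_1),\dots,\phi(a_n)\bigr).
\]
Applying this to $q = p$, we see that if $v = p(a_1,\dots,a_n)$ is in the image of $p$, then so is $\phi(v) = p(\phi(a_1),\dots,\phi(a_n))$. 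Combining the two properties, the image of $p$ is a self similar cone, as required. \qed
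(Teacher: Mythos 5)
Your proof is correct and follows essentially the same approach as the paper: the cone property from linearity in a single variable, and self similarity from the identity $\varphi(p(a_1,\dots,a_n))=p(\varphi(a_1),\dots,\varphi(a_n))$ for any automorphism $\varphi$. Your added remark justifying this identity by induction over words in the free magma is a harmless (and correct) elaboration of what the paper takes for granted.
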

\begin{proof}
The cone property follows from the fact that $p(\alpha a_1,a_2,\dots,a_n)=\alpha p(a_1,a_2,\dots,a_n)$ for scalar $\alpha$ and elements $a_i\in A$. Self similarity follows from the fact that $\varphi(p(a_1,a_2,\dots,a_n)=p(\varphi(a_1),\varphi(a_2),\dots,\varphi(a_n))$ for every automorphism $\varphi$ and elements $a_i\in A$.
\end{proof}

\section{Multilinear polynomial evaluations on O}
Assume throughout that $\a \in F$ satisfies the property:

\textbf{Property P}. $\|a_1-\a a_2\|$ is a square in $F$, for any
$a_i\in Q.$

One situation in which this occurs obviously is for $F$ closed under
square roots. Another situation is for when every norm in $F$ is a
square, $\a = -1,$ and $F$ is a Pythagorean field (the sum of two
squares is a square). This is the case for the real Octonions.

\begin{theorem}[{\cite[Corollary 1.7]{Sp}}]\label{automorphism}
Suppose Property P is satisfied. Let $x,y\in V$ be two nonzero pure
octonions. Then there exists $c\in F$ and an automorphism $\varphi
\colon \Oct \rightarrow \Oct $ such that $y=c\varphi(x)$. We can
take $c=1$ if $x$ and $y$  have the same norm.
\end{theorem}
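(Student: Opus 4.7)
The plan is to establish the theorem in two steps: a scaling reduction via Property~P, followed by a transitivity argument for $\operatorname{Aut}(\Oct)$ acting on pure octonions of a fixed nonzero norm.

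\textbf{Step 1 (scaling).} First I would reduce to the case $\|x\|=\|y\|$. Every pure octonion $a$ satisfies $a^2=-\|a\|\in F\cdot 1$, so $\|x\|,\|y\|\in F$. Property~P, applied to the Cayley-Dickson coordinate expressions of $x$ and $y$ on $\Oct=(Q,\a)$, forces these norms to be squares in $F$: write $\|x\|=s^2$ and $\|y\|=t^2$. Taking $c=t/s$ (when both are nonzero) yields $\|y/c\|=\|x\|$, so it suffices to produce $\varphi\in\operatorname{Aut}(\Oct)$ with $\varphi(x)=y/c$. The isotropic case $\|x\|=0$ requires a separate argument showing that the set of nonzero pure octonions with vanishing norm is a single $\operatorname{Aut}(\Oct)$-orbit, which one can handle by analogous Cayley-Dickson constructions inside a hyperbolic plane.

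\textbf{Step 2 (transitivity).} The core task is: given pure $x,y\in V$ with $\|x\|=\|y\|\neq 0$, construct $\varphi\in\operatorname{Aut}(\Oct)$ with $\varphi(x)=y$. My plan is to build $\varphi$ inductively along the Cayley-Dickson tower. The element $x$ generates a quadratic \'etale subalgebra $F[x]\cong F[t]/(t^2+\|x\|)$, isomorphic to $F[y]$ via $x\mapsto y$ since the norms match. Next I would pick a pure $u$ orthogonal to $F[x]$ with $\|u\|\neq 0$, using nondegeneracy of the bilinear form $\langle\cdot,\cdot\rangle$; since orthogonal pure octonions anticommute, $F[x]+F[x]\cdot u$ is a quaternion subalgebra $Q_x$. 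Property~P then supplies a pure $u'$ orthogonal to $F[y]$ with $\|u'\|=\|u\|$, giving a quaternion subalgebra $Q_y$ and extending the map to an isomorphism $Q_x\to Q_y$ sending $u\mapsto u'$. A final doubling by elements $v$ orthogonal to $Q_x$ and $v'$ orthogonal to $Q_y$ of common norm (again obtained from Property~P) identifies $\Oct=(Q_x,\|v\|)$ with $\Oct=(Q_y,\|v'\|)$, producing the desired automorphism.

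\textbf{The main obstacle} is arranging at each stage of the tower that the prescribed norm can simultaneously be realized in the orthogonal complement on both the $x$-side and the $y$-side. This is precisely what Property~P delivers: it pins down the image of the octonion norm form to the squares in $F$, so any norm achievable on one side is achievable on the other, and the induction closes. A secondary technical point is checking that the resulting linear map is truly multiplicative rather than merely an isometry of the norm form; this follows from the functoriality of Cayley-Dickson doubling, namely that an isomorphism of the base algebra preserving the doubling scalar lifts uniquely to an isomorphism of the doubled algebra. The full argument is the content of~\cite[Corollary~1.7]{Sp}.
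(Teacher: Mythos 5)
Your proposal follows the same overall route as the paper: use Property P to extract square roots of the norms and rescale so that $x$ and $y$ have equal norm, then invoke transitivity of $\operatorname{Aut}(\Oct)$ on pure octonions of a fixed nonzero norm. The difference is in how the second ingredient is handled. The paper treats the equal-norm case entirely as a black box, citing \cite[Corollary~1.7]{Sp}, and realizes the reduction by sending a fixed base point $i$ to $x/\sqrt{\|x\|}$ and to $y/\sqrt{\|y\|}$ and composing the two automorphisms; you instead sketch a proof of the cited transitivity via the standard Cayley--Dickson doubling argument, which is essentially how Springer--Veldkamp prove it. That sketch is sound, but be aware that the step ``any norm realizable orthogonally to $F[x]$ is realizable orthogonally to $F[y]$'' does not follow from Property P alone (Property P only places the image of the norm form inside the squares of $F$); what actually closes this step is Witt's extension theorem applied to the isometry $F[x]\to F[y]$, or equivalently the norm-preserving extension results preceding \cite[Corollary~1.7]{Sp}. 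Two smaller observations: your scalar $c=\sqrt{\|y\|}/\sqrt{\|x\|}$ is the correct one (the paper's displayed $c=\|y\|/\|x\|$ is off by a square root), and you are right to flag the isotropic case $\|x\|=0$ --- though your proposed fix cannot rescue the statement in full generality, since an isotropic and an anisotropic pure octonion can never satisfy $y=c\varphi(x)$; the theorem implicitly assumes the norm form is anisotropic on $V$ (as for the real octonions), and the paper's own proof silently divides by $\sqrt{\|x\|}$ as well.
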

\begin{proof} If $x,y$ have the same norm then this is  \cite[Corollary~1.7]{Sp}. In general, one can use Property P to find $c =
\sqrt{\|a_1\|-\|a_2\|\a}.$ let $\varphi_1$ be an automorphism of
$\Oct$ with $\varphi_1(i)=\frac{x}{\sqrt{\lVert x\rVert}}$ and let
$\varphi_2$ be an automorphism of $\Oct$ with
$\varphi_2(i)=\frac{y}{\sqrt{\lVert y\rVert}}$. If we define
$c=\frac{\lVert y\rVert}{\lVert x\rVert}$, then $\varphi=\varphi_2
\circ {\varphi_1}^{-1}$ is an automorphism of $\Oct$ with
$y=c\varphi(x)$.
\end{proof}
\begin{corollary}\label{ V irreducible}
The space $V$ of pure octonions is an irreducible self similar cone.
\end{corollary}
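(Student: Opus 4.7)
The plan is to verify directly that $V$ satisfies the three conditions for being an irreducible self similar cone, leveraging Theorem~\ref{automorphism} for the irreducibility part. First, I would check the cone and self-similarity properties as a warm-up: if $x \in V$, that is $\trace(x) = 0$, then clearly $\trace(\beta x) = \beta \trace(x) = 0$ for every $\beta \in F$, so $\beta x \in V$. For self-similarity, any automorphism $\varphi$ of $\Oct$ fixes $1$ and preserves the minimal polynomial of every element, hence preserves the trace; therefore $\varphi(V) \subseteq V$. (Alternatively, one may observe that $V$ is the orthogonal complement of $F\cdot 1$ with respect to the bilinear form $\langle\,,\,\rangle$, which is automorphism-invariant.)

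The core of the argument is irreducibility. Let $W \subseteq V$ be a self similar cone with $W \ne \{0\}$; I would fix any nonzero $x \in W$ and any nonzero $y \in V$, and show $y \in W$. By Theorem~\ref{automorphism}, Property~P guarantees the existence of a scalar $c \in F$ and an automorphism $\varphi$ of $\Oct$ with $y = c\,\varphi(x)$. Since $W$ is self similar, $\varphi(x) \in W$; since $W$ is a cone, $c\,\varphi(x) = y \in W$. Since $y$ was an arbitrary nonzero element of $V$, and $0 \in W$ automatically (as $W$ is a cone: $0 = 0 \cdot x$), this gives $W = V$.

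I do not anticipate a genuine obstacle here: the substance of the statement is really Theorem~\ref{automorphism}, which has already been established, and the corollary is just a packaging of that transitivity result in the language of self similar cones. The only small care required is the verification that $0 \in W$, which follows from the cone property rather than from any assumption about $W$ containing $0$ a priori, and the observation that automorphisms preserve trace so that $\varphi(V)\subseteq V$ makes sense of applying $\varphi$ within $V$.
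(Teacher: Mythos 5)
Your proof is correct and follows the same route as the paper: the irreducibility step is exactly the paper's argument, applying Theorem~\ref{automorphism} to write an arbitrary nonzero $y\in V$ as $c\,\varphi(x)$ for a nonzero $x$ in the sub-cone $W$ and then invoking self-similarity and the cone property of $W$. The extra verifications you include (that $V$ itself is a cone closed under automorphisms, and that $0\in W$) are sound and merely make explicit what the paper leaves implicit.
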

\begin{proof}
let $W\neq \{0\}$ be a self similar cone contained in $V$ and let
$w\in W$ be any nonzero element. If  $x\in V\setminus\{0\}$, there
exists an automorphism $\varphi \colon \Oct \rightarrow \Oct $ and
$c\in F $ such that $x=c\varphi(w)$. Since $W$ is a self similar
cone it follows that $x\in W$.
\end{proof}

\begin{lemma}\label{basic octonions}
 Let $p(x_1,\dots,x_n)\in  F \{x_1,\dots,x_n\}$ be a multilinear polynomial. If $e_{i_1},\dots,e_{i_n}$ are basic octonions, then the evaluation $p(e_{i_1},\dots,e_{i_n})$ is a scalar multiple of a basic octonion $q$ i.e. $p(e_{i_1},\dots,e_{i_n})=c\cdot q$, for some $c\in F $.
\end{lemma}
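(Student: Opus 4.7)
The plan is to exploit the fact that, in the Cayley--Dickson construction of $\Oct = (Q,\alpha)$, the eight basic octonions can be naturally indexed by the group $G = (\mathbb{Z}/2)^3$, in such a way that products of basis vectors respect the group law up to scalars. Specifically, since $Q = F + Fx + Fy + Fxy$, the quaternion basis $\{1, x, y, xy\}$ is indexed by $(\mathbb{Z}/2)^2$ (via binary representation), and appending a third bit to record which Cayley--Dickson summand a basis vector lies in gives the indexing of $e_0, \ldots, e_7$ by $G$.

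The first key step is to establish the formula
\[
e_g \cdot e_h = \lambda(g,h)\, e_{g+h}
\]
for all $g,h \in G$, where $\lambda(g,h) \in F$ and $g+h$ is the group sum in $G$. This is a direct calculation using the Cayley--Dickson product $(a_1, a_2)(b_1, b_2) = (a_1 b_1 + \alpha b_2^* a_2, b_2 a_1 + a_2 b_1^*)$ together with $\trace(x) = \trace(y) = \trace(xy) = 0$ (so that $x^* = -x$, etc.); one checks case by case that each product of two basis vectors lies in the one-dimensional subspace spanned by $e_{g+h}$.

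Once this is in hand, I would induct on the structure of a nonassociative word in the free magma. For $w = w_1 \cdot w_2$ the variables split between $w_1$ and $w_2$; by induction each $w_j$ evaluates to a scalar multiple of $e_{\sigma_j}$, where $\sigma_j$ is the $G$-sum of the indices appearing in $w_j$. The product formula from the previous step then yields
\[
w(e_{i_1}, \ldots, e_{i_n}) = \mu_w \cdot e_{i_1 + \cdots + i_n}
\]
for some $\mu_w \in F$, and crucially the basic octonion on the right depends only on the multiset of indices, not on the word structure or parenthesization.

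To finish, write $p = \sum_w c_w w$ as a linear combination of nonassociative words in $x_1, \ldots, x_n$ (each using every variable exactly once, by multilinearity). Every monomial evaluates to a scalar multiple of the same basic octonion $e_{i_1 + \cdots + i_n}$, so $p(e_{i_1}, \ldots, e_{i_n}) = \bigl(\sum_w c_w \mu_w\bigr) e_{i_1 + \cdots + i_n}$, which is the required form. The main obstacle is the opening step: verifying the rule $e_g \cdot e_h = \lambda(g,h)\, e_{g+h}$ requires careful bookkeeping of signs, of the parameter $\alpha$, and of the involution on $Q$, but is otherwise entirely routine.
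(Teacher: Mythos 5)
Your proposal is correct, and it reaches the same intermediate fact as the paper --- that every monomial of $p$, evaluated on the fixed multiset of basic octonions, is a scalar multiple of one and the same basic octonion --- but by a different mechanism. The paper asserts that the product of two basic octonions is commutative up to sign and the product of three is associative up to sign, so that any two monomials $m_i(e_{i_1},\dots,e_{i_n})$ and $m_1(e_{i_1},\dots,e_{i_n})$ differ only by a factor of $\pm 1$; this is quick but leans on two unproved "up to sign" claims. You instead observe that $\Oct=(Q,\a)$ is graded by $G=(\mathbb{Z}/2)^3$ with one-dimensional homogeneous components $Fe_g$, i.e.\ $e_g e_h=\lambda(g,h)e_{g+h}$, and then a straightforward induction on the word structure shows every nonassociative monomial lands in $F e_{i_1+\cdots+i_n}$, independently of the ordering and parenthesization, because $G$ is abelian. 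Your route buys two things: it does not need the sign-commutativity and sign-associativity statements at all (only that each pairwise product lies in the correct one-dimensional component, with $\lambda(g,h)$ allowed to involve $\a$ and the norms of $x,y$ rather than just $\pm 1$), and it identifies the basic octonion $q$ explicitly as $e_{i_1+\cdots+i_n}$. The paper's route is shorter once its two auxiliary facts are granted, and it yields the slightly stronger (though unneeded) conclusion that the monomial values agree up to sign. The one verification you defer --- the case-by-case check of $e_g e_h=\lambda(g,h)e_{g+h}$ from the Cayley--Dickson multiplication and the involution on $Q$ --- is indeed routine and presents no obstacle.
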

\begin{proof}
The product of two basic octonions is commutative up to sign, and
the product of three basic octonions is associative up to sign.
Write $p(x_1,\dots,x_n)$ as $p(x_1,\dots,x_n)=\sum_{i=1}^N
c_im_i(x_1,\dots,x_n)$, where $c_i\in F $ and each $m_i$ is a
monomial in $x_1,\dots,x_n$.  As $p(x_1,\dots,x_n)$ is multilinear,
each variable $x_j$  appears in every momomial $m_i$ exactly once.
If $q=m_1(e_{i_1},\dots,e_{i_n})$, then for any $i$,
$m_i(e_{i_1},\dots,e_{i_n})=b_i q$, where $b_i=\pm 1$. Thus
$p(e_{i_1},\dots,e_{i_n})=c\cdot q$ where $c=\sum_{i=1}^Nc_ib_i$.
\end{proof}

\begin{theorem}\label{Oct}
The image of a multilinear polynomial $p$ on an octonion algebra
$\Oct$ is either $\{0\}$, $ F $, $V$ or $\Oct$.
\end{theorem}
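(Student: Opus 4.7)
The image $S := \Image(p)$ is a self-similar cone by Lemma \ref{image cone}. My strategy is to split $p$ via the trace. Define
\[
T(x_1,\ldots,x_n) := \trace(p(x_1,\ldots,x_n)) \in F, \qquad P(x_1,\ldots,x_n) := p(x_1,\ldots,x_n) - T(x_1,\ldots,x_n) \in V.
\]
Both $T$ and $P$ are multilinear in the $x_i$, and $p = T \cdot 1 + P$. There are four cases according to whether each of $T$ and $P$ is identically zero on $\Oct^n$.

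Three cases are straightforward. If $T \equiv 0$ and $P \equiv 0$, then $p \equiv 0$ and $S = \{0\}$. If $T \equiv 0$ but $P \not\equiv 0$, then $S$ is a nonzero self-similar cone sitting inside $V$, so $S = V$ by Corollary \ref{ V irreducible}. If $P \equiv 0$ but $T \not\equiv 0$, then $S$ is a nonzero cone inside $F$, and closure under scalar multiplication forces $S = F$.

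The substantive case is $T \not\equiv 0$ and $P \not\equiv 0$, and the goal is to show $S = \Oct$. For any tuple $\vec a = (a_2,\ldots,a_n)$, the map $L_{\vec a}\colon x \mapsto p(x,a_2,\ldots,a_n)$ is an $F$-linear endomorphism of $\Oct$ by multilinearity, and $\Image(L_{\vec a})$ is an $F$-subspace contained in $S$. It therefore suffices to exhibit a single $\vec a$ for which $L_{\vec a}$ is surjective, since then $\Oct = \Image(L_{\vec a}) \subseteq S$. Viewing $\det L_{\vec a}$ as a polynomial in the $8(n-1)$ coordinates of $\vec a$, the existence of such a $\vec a$ reduces to the nonvanishing of this polynomial; one can then take any $\vec a$ in the Zariski-open locus where $\det L_{\vec a} \ne 0$.

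The principal obstacle is to establish this nonvanishing of $\det L_{\vec a}$ from the hypotheses $T \not\equiv 0$ and $P \not\equiv 0$. If $\det L_{\vec a}$ were identically zero, then $L_{\vec a}$ would have a nontrivial kernel for every $\vec a$, producing a ``universal'' zero of $p$ in the first variable and hence a zero-divisor-like relation valid throughout $\Oct$. I expect to rule this out via Theorem \ref{Am}: the generic value of $p$ lives in the Cayley--Dickson division algebra in which the relatively free algebra of $\Oct$ is a central order, and under the present case hypotheses this generic value is neither central (since $P \not\equiv 0$) nor pure (since $T \not\equiv 0$), hence is nonzero and therefore invertible, precluding a universal kernel for $L_{\vec a}$. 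Should this division-algebra step resist a clean argument, a constructive alternative is to invoke Lemma \ref{basic octonions} to obtain scalar and pure witnesses of $p$ under basic-octonion substitutions, then apply Theorem \ref{automorphism} together with the additivity identity $p(x+y, a_2, \ldots, a_n) = p(x, a_2, \ldots, a_n) + p(y, a_2, \ldots, a_n)$ to hand-assemble a tuple $\vec a$ whose $L_{\vec a}$ hits eight linearly independent octonions, yielding $\Image(L_{\vec a}) = \Oct$ directly.
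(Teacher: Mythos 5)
Your case division via $p = T\cdot 1 + P$ is equivalent to the paper's (which sorts the evaluations on basic octonions via Lemma \ref{basic octonions}), and your handling of the three easy cases is correct and clean. The problem is the fourth case, which is where all the work lies, and there your argument has a genuine gap. The claim you need is $\det L_{\vec a}\not\equiv 0$, but the justification offered does not work: if $\det L_{\vec a}$ vanished identically, each $L_{\vec a}$ would indeed have a nontrivial kernel, but the kernel vector \emph{depends on} $\vec a$, so you get no ``universal zero'' of $p$ and no zero-divisor relation in the relatively free algebra. The commutator $p(x,y)=xy-yx$ on the quaternions (or octonions) shows the inference fails: $L_a(x)=xa-ax$ kills $1$ and $a$ for every $a$, so $\det L_a\equiv 0$, yet the algebra is a division algebra and the generic value $[\xi_1,\xi_2]$ is invertible. (That example has $T\equiv 0$, so it does not refute your \emph{claim} in case 4, but it does refute your \emph{argument} for it.) Invertibility of the generic value of $p$ only says $p$ is not a PI; it says nothing about surjectivity of the specialized linear maps $L_{\vec a}$.

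Your fallback is closer to the paper but skips the key difficulty. Lemma \ref{basic octonions} gives you one basic tuple with a nonzero scalar value and another basic tuple with a nonzero pure value, but these tuples may differ in many coordinates, so additivity in a single slot does not let you combine them. The entire technical content of the paper's proof of case (4) is the reduction to two tuples differing in exactly \emph{one} coordinate: it introduces the chain $A_0,\dots,A_n$ of partial specializations, locates the first index $i$ with $\Image A_{i-1}\subseteq F$ but $\Image A_i\not\subseteq F$, and uses an algebraically independent choice of the first arguments to produce octonions $o_1,\dots,o_n,\hat o_i$ with $p(o_1,\dots,o_n)=\beta\in F\setminus\{0\}$ and $p(o_1,\dots,\hat o_i,\dots,o_n)\notin F$. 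Only then does linearity in the $i$-th slot place the two-dimensional set $\{b+cv: b,c\in F\}$ inside the image, which is then spread to all of $\Oct$ by Theorem \ref{automorphism} and self-similarity --- note the paper never needs any single $L_{\vec a}$ to be surjective; it takes a union over automorphisms. You need to supply this one-coordinate reduction (or an honest proof that $\det L_{\vec a}\not\equiv 0$ in case 4) before the theorem is proved.
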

\begin{proof}
Consider the set of evaluations of $p$ on basic octonions.\newline
According to Lemma \ref{basic octonions} there are exactly four cases:\begin{enumerate}
    \item All evaluations give zero.
    \item All evaluations are in $F$, and there is a nonzero evaluation.
    \item All evaluations are pure octonions and there is a nonzero evaluation.
    \item There is an evaluation giving a nonzero element of $F$, and there is an evaluation giving a nonzero pure octonion.
\end{enumerate}
If (1) is the case, it follows that the image of $p$ on $\Oct$ is
 $\{0\}$. If (2) is the case, it follows that the image of $p$ on
$\Oct$ is $ F $. If (3) is the case, then since $\Image p\subset V$
and  $\Image p\neq\{0\}$, it follows from Lemma \ref{image cone} and
Lemma \ref{ V irreducible} that $\Image(p)=V$.\newline Thus it
suffices to prove that if (4) is the case then $\Image(p)=\Oct$.
There exist basic octonions $(x_1,\dots,x_n)$ and $(y_1,\dots,y_n)$
such that \begin{equation*}
    p(x_1,\dots,x_n)\in  F \setminus\{0\}
\end{equation*}  and \begin{equation*}
    p(y_1,\dots,y_n)\in V\setminus \{0\}.
\end{equation*} Let us define the following $n+1$ polynomial functions:

$A_0(z_1,\dots,z_n)=p(x_1,\dots,x_n)$ (a constant function),

$A_1(z_1)=p(z_1,x_2,\dots,x_n)$,

$A_2(z_1,z_2)=p(z_1,z_2,x_3,\dots,x_n)$,

 \noindent

 and so on, until

$A_n(z_1,\dots,z_n)=p(z_1,\dots,z_n)$.

Note that
$\Image A_0$ cosists of one nonzero scalar,
$\Image A_i\subset \Image A_{i+1}$ for
all $i$, and $\Image A_n=\Image p$ contains nonzero pure
octonions. Hence, there must
exist an $i$ such that $\Image A_{i-1}\subseteq  F $ and $\Image
A_{i}\nsubseteq  F $.\newline

We now claim that there exist octonions $o_1,\dots,o_n, \hat{o}_i$
such that $p(o_1,\dots,o_n)=r\in  F \setminus\{0\}$ and
$p(o_1,\dots,o_{i-1},\hat{o}_i,o_{i+1},\dots,o_n)\notin  F $. This
is proved as follows. Note that $p(z_1,\dots,z_n)$ induces $8$ real
valued polynomial functions of $8n$ real variables\begin{equation*}
    p(z_1,\dots,z_n)=p_0(t_1,\dots,t_{8n})e_0+\dots +p_7(t_1,\dots,t_{8n})e_7.
\end{equation*}
 This also induces
for each $j$, eight polynomial functions
\begin{equation*}
   A_j(z_1,\dots,z_j)= \Tilde{A}_{j,0}(t_1,\dots,t_{8j})e_0+\cdots+\Tilde{A}_{j,7}(t_1,\dots,t_{8j})e_7
\end{equation*} Choose $8i$ algebraically independent real numbers and form $i$ octonions by grouping eight numbers at a time to obtain $o_1,\dots,o_{i-1},\hat{o}_i$. Next define $o_i=x_i,o_{i+1}=x_{i+1},\dots,o_n=x_n$.  Since  $\Image A_{i-1}\subset  F $ we have $p(o_1,\dots,o_n)=A_{i-1}(o_1,\dots,o_{i-1})=r\in  F $. If $r=0$ were the case, then all eight polynomial functions $\Tilde{A}_{i-1,k}(t_1,\dots,t_{8i-8})$ would vanish at the algebraically independent set corresponding to $o_1,\dots,o_{i-1}$ and this will imply that $\Tilde{A}_{i-1,k}(t_1,\dots,t_{8i-8})=0$ identically,
 in contradiction with $p(x_1,\dots,x_n)\in  F \setminus\{0\}$.

Similarly, if
$p(o_1,\dots,\hat{o}_i,\dots,o_n)=A_i(o_1,\dots,\hat{o}_i)\in  F $,
this would imply that $\tilde{A}_{i,k}(t_1,\dots,t_{8i})$ for $k>1$
is zero, so that $\Image A_i\subset  F $, a contradiction.
\newline

Hence, let us choose octonions $o_1,\dots,o_n, \hat{o}_i$ such that
$p(o_1,\dots,o_n)=\beta\in  F \setminus\{0\}$ and
$p(o_1,\dots,o_{i-1},\hat{o}_i,o_{i+1},\dots,o_n)=\gamma +v$ where
$\gamma\in
 F $ and $v$ is a nonzero pure octonion. Define
$\Tilde{o}_i=\hat{o}_i-\beta^{-1}ao_i$ and note that by
multilinearity of $p$ we get
$p(o_1,\dots,o_{i-1},\tilde{o}_i,o_{i+1},\dots,o_n)=v$. Let $q$ be
an arbitrary octonion and write it as $q=b+u$ where $b$ is real and
$u$ is a pure octonion and we may assume that $u\neq 0$. By theorem
\ref{automorphism} there is a $c\in  F $ and an automorphism
$\varphi \colon \Oct \rightarrow \Oct $ such that $u=c\varphi(v)$.
Thus:\begin{multline*}
    p(\varphi(o_1),\dots,\varphi(\beta^{-1}bo_i+c\tilde{o}_i),\dots,\varphi(o_n))=
    \\
    =\varphi(p(o_1,\dots,\beta^{-1}bo_i+c\tilde{o}_i,\dots,o_n))=\varphi(b+cv)=b+u=q.
\end{multline*}
This proves that $q\in \Image p$ and thus $\Image p=\Oct$.
\end{proof}
\section{Semihomogeneous polynomial evaluations on O}
Here we  consider evaluations of semihomogeneous polynomials of
nonzero weighted degree on $\Oct$, and prove the following theorem:
\begin{theorem}\label{semi}
Let $p(x_1,\dots, x_m)$ be a semihomogeneous polynomial with nonzero
weighted degree $d$, i.e., letting $d_i$ be the degree of $x_i$,
there exist weights $w_1,w_2,\dots,w_m$ such that in each monomial
 $$w_1d_1+w_2d_2+\dots+w_md_m=d.$$ Then the evaluation of $p$ on
$\Oct$ is either $\{0\}$, or $F$, or V is Zariski dense in $\Oct$.
\end{theorem}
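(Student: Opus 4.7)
My approach mirrors the multilinear proof of Theorem~\ref{Oct} but replaces the ``image equals $F$, $V$ or $\Oct$'' conclusion with ``the Zariski closure is $F$, $V$, or $\Oct$''. First I observe that semihomogeneity gives $p(\lambda^{w_1}x_1,\dots,\lambda^{w_m}x_m)=\lambda^{d}p(x_1,\dots,x_m)$, so $\Image p$ is closed under multiplication by $\{\lambda^{d}:\lambda\in F\}$. Since $F$ is infinite and $d\ne 0$, this set is Zariski dense in $F$, hence $\overline{\Image p}$ is closed under multiplication by all of $F$---a genuine closed cone. The auto-invariance argument of Lemma~\ref{image cone} uses only that polynomial evaluation commutes with algebra automorphisms, so it applies verbatim; therefore $\overline{\Image p}$ is an $\operatorname{Aut}(\Oct)$-invariant closed cone, and it is irreducible as the closure of the image of the irreducible variety $\Oct^{m}$ under a polynomial map.

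Next I split into four cases by the generic behavior of $p$ regarded as a polynomial map $\Oct^{m}\to\Oct$. If $p\equiv 0$, then $\overline{\Image p}=\{0\}$. If $\trace(p(x))$ vanishes identically in $x\in\Oct^{m}$, then $\Image p\subseteq V$; Theorem~\ref{automorphism} combined with the cone property shows that the $(\operatorname{Aut}(\Oct)\times F^{\times})$-orbit of any nonzero pure octonion is Zariski dense in $V$, so $\overline{\Image p}=V$. If instead the pure part $p-\trace(p)/2$ vanishes identically, then $\Image p\subseteq F$, and a nonzero closed cone in $F$ equals $F$. In every remaining situation there are substitutions for which $p$ returns a value $\alpha+v$ with both $\alpha\in F^{\times}$ and $v\in V\setminus\{0\}$, and I must show $\overline{\Image p}=\Oct$.

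The hard part will be this last case. The $(\operatorname{Aut}(\Oct)\times F^{\times})$-orbit of a single value $\alpha+v$ sits inside the hypersurface $\{\beta+u\in\Oct:\alpha^{2}\|u\|=\beta^{2}\|v\|\}$, cut out by the scale-invariant ratio $\|v\|/\alpha^{2}$. Since the ring of $\operatorname{Aut}(\Oct)$-invariants on $\Oct=F\oplus V$ is generated by $\trace$ and the pure-part norm, any proper auto-invariant closed cone in $\Oct$ is defined by a nonzero weighted-homogeneous polynomial identity $R\bigl(\trace(p),\,\|p-\trace(p)/2\|\bigr)=0$. I would rule such an identity out by a specialization argument in the style of the proof of Theorem~\ref{Oct}: substituting for $x_1,\dots,x_m$ octonions whose $8m$ coordinates with respect to the basic-octonion basis are algebraically independent over $F$, the two scalar-valued polynomials $\trace(p)$ and $\|p-\trace(p)/2\|$ in those coordinates should be algebraically independent whenever neither is identically zero. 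That forces $R\equiv 0$, giving $\overline{\Image p}=\Oct$ and hence $\Image p$ is Zariski dense in $\Oct$.
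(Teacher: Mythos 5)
Your reduction is sound up to the last step: the cone property from semihomogeneity, the $\operatorname{Aut}(\Oct)$-invariance, and the observation that (because $\trace(p)$ scales as $\lambda^{d}$ and the pure-part norm as $\lambda^{2d}$) the only invariant relation that could confine $\overline{\Image p}$ to a proper subvariety in the mixed case is one of the form $\|v\|=c\,a^{2}$ with $c\neq 0,\infty$, where $p=a+v$. This is exactly the case decomposition the paper uses, phrased via the ratio $f=\|v\|/a^{2}$. But your proof of the crucial claim --- that $\trace(p)$ and $\|p-\trace(p)/2\|$ are algebraically independent whenever neither vanishes identically --- is not a proof. Specializing the $8m$ coordinates to algebraically independent elements cannot detect an \emph{identical} relation: if $\|v\|-c\,a^{2}$ is the zero polynomial in the coordinates, it vanishes at generic points just as it does everywhere, so ``forces $R\equiv 0$'' does not follow. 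Moreover, the blanket claim that two non-identically-zero polynomials are algebraically independent is false in general (consider $P$ and $P^{2}$); you need an argument specific to octonion evaluations, and you have not supplied one. This is precisely the hard point of the theorem.

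The paper closes this gap differently: it passes to the Amitsur algebra of generic octonions (the relatively free algebra of the variety generated by the Cayley numbers) and its central localization, which is a domain by Theorem~\ref{Am} and \cite[Lemma 13.4]{ZSSS}. If $\|v\|=c\,a^{2}$ held identically with $c\neq 0$, the two eigenvalues $\lambda_{1,2}=a\pm\sqrt{-\|v\|}=a(1\pm\sqrt{-c})$ would be scalar multiples of $a=\tfrac12(\lambda_1+\lambda_2)$ and hence elements of that algebra, and the Cayley--Hamilton identity $(p-\lambda_1(p))(p-\lambda_2(p))=0$ would exhibit zero divisors --- a contradiction. You would need either to reproduce this (or an equivalent) argument, or find another way to exclude the identity $\|v\|\equiv c\,a^{2}$; as written, your case (4) analysis assumes the conclusion.
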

\begin{proof}
The set of polynomial evaluation on an algebra is closed under each
automorphism, and thus is self-similar. In analogy to Lemma
\ref{image cone}, the image of a semihomogeneous polynomial with
nonzero weighted degree is a   self similar cone. Here we cannot
consider only evaluations on basic octonions, since an evaluation of
a semihomogeneous polynomial does not have to be a linear
combination of its basic evaluations. Nevertheless, it follows from
Theorem~\ref{automorphism} that any two pure octonions having the
same norm are equivalent. Therefore the orbit of any octonion $a+v$
(where $a\in F$ and $v\in V$) is the set of all octonions $\{a+w:
\|w\|=\|v\|.\}$ Consider the function
$$f(x_1,\dots,x_m)=\frac{\|v\|}{a^2},$$ where
$p(x_1,\dots,x_m)=a+v,a\in F, v\in V.$ It is easy to see that this
function is a commutative rational function in the coordinates of
$x_i$. There are four possibilities:
\begin{enumerate}
\item The function $f$ is not a constant.
\item The function $f$ constantly equals $0$.
\item The function $f$ constantly equals $\infty$.
\item The function $f$ constantly equals some $c$ which is  neither $0$ nor~$\infty$.
\end{enumerate}
In the first case $\Image f$ must be Zariski dense and therefore
$\Image p$ must be Zariski dense as well.
 In the second case
$p$ takes on only scalar values and $\Image p=F.$ In the third case
$p$ takes only pure octonion values and $\Image p=V.$

If we can show that the fourth case is impossible, Theorem
\ref{semi} will be proved.

For that we consider the algebra of (central) fractions of
the ``Amitsur algebra'' of generic octonions.
 It is a free algebra in the variety generated by the algebra of Cayley numbers and, by \cite[Lemma 13.4]{ZSSS} or \cite{Row3}, does not contain zero divisors. The fact that this algebra is free is true for the algebra of generic elements of any finite-dimensional algebra (this fact is noted in \cite{Row2,Row3} and \cite[Proposition 1.2]{PS}).
As in the associative case, the scalar and pure parts of generic
octonions are elements of this algebra.

Note that each octonion $a+v$ has two eigenvalues
$$\lambda_{1}=a+\sqrt{-\|v\|}, \qquad \lambda_{2}=a -\sqrt{-\|v\|}.$$
 Therefore if a homogeneous polynomial
$p$ satisfying the fourth condition  exists, we will have the
situation where $\lambda_1(p)+\lambda_2(p)$ is an element of the
Amitsur algebra, both eigenvalue functions $\lambda_i(p)$ are linear
multiples of $\lambda_1(p)+\lambda_2(p)$ and thus are elements of
the Amitsur algebra as well. Each octonion satisfies the
Cayley-Hamilton polynomial, so
$$(p-\lambda_1(p))(p-\lambda_2(p))=0,$$
 contradicting  the fact that the
Amitsur algebra is a domain.
\end{proof}
\begin{remark}
If $p(x_1,\dots,x_m)$ is a semihomogeneous polynomial whose image
set $\Image p$ is dense in $\Oct$, then one considers the function
$f(x_1,\dots,x_m)$ which takes on the value
$\frac{\lambda_1}{\lambda_2}+\frac{\lambda_2}{\lambda_1}$ where
$\lambda_{1}$ and $\lambda_2$ are the eigenvalues of $p(x_1,\dots,x_m)$.
We call $\frac{\lambda_1}{\lambda_2}$ a ``ratio of eigenvalues.''

The function $f(x_1,\dots,x_m)$ is a rational function defined on
$8m$ parameters (being coordinates of $x_i$, each $x_i$ having $8$
coordinates). If $\Image p$ is dense in $\Oct$, then $\Image f$ must
be dense in $K$. If $K$ is algebraically closed, $\Image p$ must be
either $K$ itself or
 the complement of a finite set. Note that the value of $f$
is invariant while we remain in the  orbit of $p(x_1,\dots,x_m)$.
Therefore the image of $p$ is dense in $\Oct$ only if it contains
all the octonions except octonions having special ratios of
eigenvalues. Note that in case of ratio $1$ one can exclude either all octonions having equal eigenvlues, or scalar octonions only (the similar example for matrices was constructed in \cite[Example 4(i)]{BMR1}). Such a polynomial exists and can be constructed (see
Example \ref{expol} after this remark). If $K=\R$ then one can
construct examples similar to \cite[Example 1]{M2} using the fact
that any subalgebra of $\Oct$ generated by $2$ elements is
associative. The question of the full classification of possible
semihomogeneous dense evaluations on $\Oct$ and $\mathbb{H}$ over
$\R$ remains open.
\end{remark}
\begin{example}\label{expol}
Let $S$ be any finite subset of $K$. There exists a completely homogeneous
polynomial $p$ such that $\Image p$ is the set of all octonions except the
octonions with ratio of eigenvalues from $S$.
Consider
$$f(x)=x\cdot \prod_{\delta\in S}(\lambda_1-\lambda_2\delta)(\lambda_2-\lambda_1\delta) $$
This is a  ``polynomial
in $x$ with scalar parts and
norms'', the version of ``polynomial with traces'' from \cite{BMR1}.
Using the same construction described in \cite{BMR1} one can
construct a homogeneous polynomial which   equals  this polynomial
multiplied by a central polynomial. Therefore there exists a
homogeneous polynomial whose image is the set of all octonions
except those octonions whose ratio of eigenvalues are  from a finite
set $S$.
\end{example}

One more interesting example is the polynomial $p(x)=x^2$  not
taking nilpotent values. In order to classify all evaluations of
semihomogeneous polynomials one should resolve the following
question:
\begin{que}
Does there exist a semihomogeneous polynomial $p$ whose image set
$\Image p$ contains all the octonions except the unipotent
octonions? Recall that element $a+v$ (where $a$ is scalar and $v$ is
pure) is called unipotent if $a\neq 0$ and $v\neq 0$ is nilpotent.
\end{que}

Note that a polynomial from Example \ref{expol} does not take nilpotent values. Therefore we can announce one more question:

\begin{que}
Does there exist a polynomial that image set contains
all the octonions except octonions having special ratios of
eigenvalues and nilpotent octonions?
\end{que}

The same questions remain being open for $2\times 2$ matrices.

\section{evaluations of  nonassociative polynomials on anticommuting indeterminates, on Malcev algebras}
In this section we consider the Malcev algebra of pure octonions $V$
over $\R$, with product  defined as $vw=v\cdot w-w\cdot v,$ where
$\cdot$ is the standard octonion multiplication.

Obviously, this product satisfies the condition of
anticommutativity: $vw=-wv$, and, as well known, it satisfies the
Malcev identity:
$$(xy)(xz)=((xy)z)x+((yz)x)x+((zx)x)y.
$$
Thus, $V$ with this multiplication forms a Malcev algebra. In this
section we will consider the question of all possible polynomial
evaluations on~$V$, not only multilinear and/or semihomogeneous.
\begin{theorem}\label{Mal}
Let $p(x_1,\dots,x_m)$ be an arbitrary polynomial in $m$
anticommuting variables. Then its evaluation on $V$ is either
$\{0\}$ or $V$.
\end{theorem}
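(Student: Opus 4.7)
The plan is to combine $\mathrm{Aut}(\Oct)$-invariance of $\Image p$ with a connectedness argument on the squared-norm function. First I would observe that every $\varphi\in\mathrm{Aut}(\Oct)$ preserves trace, hence sends $V$ into $V$, and respects octonion multiplication, hence the Malcev bracket $vw=v\!\cdot\! w-w\!\cdot\! v$; so $\varphi(p(\vec a))=p(\varphi(\vec a))$ for every $\vec a\in V^m$, making $\Image p$ a union of $\mathrm{Aut}(\Oct)$-orbits in $V$. By the $c=1$ case of Theorem~\ref{automorphism}, those orbits are precisely $\{0\}$ and the norm-level sets $S_r=\{v\in V:\|v\|=r\}$ for $r>0$. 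So the theorem reduces to showing that, once $p$ is not an identity on $V$, the values of $\|v\|$ with $v\in\Image p$ already fill out all of $[0,\infty)$.

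Next I would study the function $F(\vec x):=\|p(\vec x)\|^2$, a polynomial in the $7m$ real coordinates of $\vec x\in V^m$. If $p$ vanishes identically on $V$ then $\Image p=\{0\}$ and we are done; otherwise pick $\vec a\in V^m$ with $p(\vec a)\ne 0$. Because every monomial in the free anticommutative algebra has positive total degree, $p(0,\dots,0)=0$, so $0\in\Image F$. For unboundedness I would restrict $F$ to the line $\alpha\mapsto\alpha\vec a$: monomial-by-monomial expansion gives a $V$-valued polynomial $p(\alpha\vec a)=\sum_{k\ge 1}\alpha^k v_k$ that is nonzero as a polynomial in $\alpha$ (its value at $\alpha=1$ is $p(\vec a)\ne 0$), so its top coefficient $v_d\in V\setminus\{0\}$ contributes a strictly positive leading term $\|v_d\|^2\alpha^{2d}$ to $F(\alpha\vec a)$ by positive-definiteness of the real octonion norm. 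Hence $F(\alpha\vec a)\to\infty$ as $\alpha\to\infty$. Since $V^m\cong\R^{7m}$ is connected, $\Image F$ is a connected subset of $[0,\infty)$ containing $0$ and unbounded above, and therefore equals $[0,\infty)$.

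Combining the two steps: every sphere $S_r$ (with $r\ge 0$) contains some point of $\Image p$, and by $\mathrm{Aut}(\Oct)$-invariance the entire sphere then lies in $\Image p$, so $\Image p=\bigcup_{r\ge 0}S_r=V$. The one step I view as a potential obstacle is the orbit description used in the first paragraph: I would want to verify carefully that Theorem~\ref{automorphism} applied over $\R$ really exhibits $\mathrm{Aut}(\Oct)$ as acting transitively on each fixed-norm sphere of $V$ (this is essentially the classical $G_2$-action on $S^6$), rather than only up to scalar. The remaining ingredients — continuity of $F$, connectedness of $V^m$, and positivity of the leading coefficient — are routine.
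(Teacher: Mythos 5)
Your proposal is correct and follows essentially the same route as the paper: both arguments study the squared norm of $p$ as a real polynomial in the $7m$ coordinates, use $p(0,\dots,0)=0$ together with unboundedness and the intermediate value theorem (connectedness) to show the norm takes every value in $[0,\infty)$, and then invoke Theorem~\ref{automorphism} to conclude that transitivity of $\mathrm{Aut}(\Oct)$ on each norm sphere of $V$ forces $\Image p=V$. Your version merely spells out details the paper leaves implicit (that automorphisms respect the Malcev bracket, and the leading-coefficient argument for unboundedness), so no substantive difference remains.
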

\begin{proof}
Indeed, if we consider the square of the norm of $p(x_1,\dots,x_m)$,
it is a commutative polynomial in $7m$ coordinates of $x_i$. (Each
of them has seven coordinates since its real part is $0$.)
Therefore, we have two cases: either this function is constant or
not. If it is a constant it must equal zero since $p(0,0,\dots,0)=0$
and each value of $p$ must have norm $0$, i.e. equals $0$. Assume
that the norm is not a constant. Thus (recalling that it is a
polynomial) for any $M$ it takes values greater than $M$, and being
a continuous function, it satisfies the intermediate value theorem
and   takes on all non-negative values. Thus, for any $M\geq 0$
there exist $x_1,\dots,x_m$ such that $\|p(x_1,\dots,x_m)\|^2=M$.
By~Theorem~\ref{automorphism}, any two pure octonions with equal
norm are equivalent. Equivalence in the Malcev algebra follows from
equivalence in $\Oct$. Therefore, $\Image p=V.$
\end{proof}

\end{document}